\newcommand{\bolda}{\mathbf{a}}
\newcommand{\boldb}{\mathbf{b}}
\newcommand{\boldp}{\mathbf{p}}
\newcommand{\boldq}{\mathbf{q}}
\newcommand{\boldx}{\mathbf{x}}
\newcommand{\boldy}{\mathbf{y}}
\newcommand{\boldzero}{\mathbf{0}}
\newcommand{\Z}{\mathbb{Z}}
\newcommand{\R}{\mathbb{R}}
\newcommand{\Q}{\mathbb{Q}}
\newcommand{\C}{\mathbb{C}}
\newcommand{\V}{\mathbb{V}}
\newcommand{\aff}{\operatorname{aff}}
\newcommand{\conv}{\operatorname{conv}}
\newcommand{\supp}{\operatorname{supp}}
\newcommand{\vvol}{\operatorname{vol}}
\newcommand{\nvol}{\operatorname{Vol}}
\newcommand{\mvol}{\operatorname{MV}}
\begin{document}

\title*{A Product Formula for the Normalized Volume of Free Sums of Lattice Polytopes}
\author{Tianran Chen and Robert Davis}
\institute{
    Tianran Chen \at
    Department of Mathematics and Computer Science,
    Auburn University Montgomery,
    Montgomery Alabama.
    \email{ti@nranchen.org}
    \and
    Robert Davis \at
	Department of Mathematics,
	Michigan State University,
	East Lansing, Michigan.
	\email{davisr@math.msu.edu}
}

\maketitle

\abstract{
    The free sum is a basic geometric operation among convex polytopes.
    This note focuses on the relationship between the normalized volume
    of the free sum and that of the summands.
    In particular, we show that the normalized volume of the free sum
    of full dimensional polytopes is precisely the product of
    the normalized volumes of the summands.
}
\abstract*{
    The free sum is a basic geometric operation among convex polytopes.
    This note focuses on the relationship between the normalized volume
    of the free sum and that of the summands.
    In particular, we show that the normalized volume of the free sum
    of full dimensional polytopes is precisely the product of
    the normalized volumes of the summands.
}

\section{Introduction}\label{sec:intro}

A (\emph{convex}) \emph{polytope} $P \subset \R^n$ is the convex hull of
finitely many points in $\R^n$.
Equivalently, $P$ is the bounded intersection of finitely many closed half-spaces in $\R^n$.
We call a polytope \emph{lattice} if its vertices are elements of $\Z^n$.
Polytopes are fascinating combinatorial objects, and lattice polytopes
are especially interesting due to their appearance in many contexts,
such as combinatorics, algebraic statistics, and physics;
see, for example,~\cite{BatyrevDualPolyhedra,sturmfels,ZieglerLectures}

Given polytopes $P \subset \R^m$ and $Q \subset \R^n$, set
\[
    P \oplus Q := \conv\{(P,\boldzero) \cup (\boldzero,Q)\} \subset \R^{m+n}.
\]
If $P$ and $Q$ each contain the origin, then we call $P \oplus Q$ the \emph{free sum}
of $P$ and $Q$.
It will be convenient to use the notation $P',Q'$ for
$(P,\boldzero),(\boldzero,Q) \in \R^{m+n}$, respectively, so that we simply write
$P \oplus Q = \conv \{P' \cup Q'\}$.
In the present contribution, we show that the normalized volume of the free sum
$P \oplus Q$ is precisely the product of the normalized volumes of $P$ and $Q$.

Given a lattice polytope $P \subset \R^n$, its \emph{lattice point enumerator}, defined on the positive integers, is
\[
    L_P(m) := |mP \cap \Z^n|.
\]
Ehrhart \cite{Ehrhart} showed that $L_P(m)$ agrees with a polynomial of degree $\dim P$ with rational coefficients, called the \emph{Ehrhart polynomial}.
Thus, the \emph{Ehrhart series}
\[
    E_P(t) := 1 + \sum_{m \geq 1} L_P(m)t^m
\]
is a rational function of the form
\[
    E_P(t) = \frac{h_P^*(t)}{(1-t)^{\dim P + 1}},
\]
where $h_P^*(t) = \sum_{i = 0}^d h_i^*t^i$ has degree $d \leq \dim P$.
The vector $h^*(P) = (h_0^*,\ldots,h_d^*)$ is called the \emph{$h^*$-vector} of $P$, and Stanley \cite{StanleyDecompositions} proved that it consists of nonnegative integers.
We refer the reader to \cite{BeckRobinsCCD} for background on Ehrhart theory.

The choice of notation $h^*$ evokes parallels with combinatorial $h$-vectors of simplicial complexes.
For example, under certain conditions, the $h^*$-vector of a lattice polytope is the $h$-vector of a simplicial polytope \cite{StanleyDecompositions}.
This is not to be expected in general, though; such a correspondence can fail even in $\R^3$.

\begin{example}\label{ex:reeve}
Let $T_r$ be the tetrahedron
\[
    T_r = \conv\{(0,0,0),(1,0,0),(0,1,0),(1,1,r)\}
\]
where $r$ is a positive integer.
It is not difficult to verify that that $h^*(T_r) = (1,0,r-1)$, but this is not the $h$-vector for a $(d-1)$-dimensional simplicial complex whenever $r > 1$.
\end{example}

Another basic result of Ehrhart theory is that if $P$ is a lattice polytope, then $h_P^*(1)$ is the \emph{normalized volume}
\[
\nvol P := (\dim P)!\vvol P.
\]
where $\vvol P$ is the relative volume of $P$.
Additionally, for all lattice polytopes, $h_0^* = 1$, and there are simple combinatorial interpretations for $h_1^*$ as well as $h_d^*$.
However, the remaining coefficients do not have such elementary descriptions in general.
This has led to a large amount of research being conducted to better understand the behavior of $h^*$-vectors for lattice polytopes, even for quite restricted classes of polytopes.
See \cite{BraunUnimodality} for a recent survey regarding $h^*$-vectors of lattice polytopes.

Define the \emph{(polar) dual} of $P \subset \R^m$ to be
\[
	P^{\vee} = \{ x \in \R^m \mid x^Ty \leq 1 \text{ for all } y \in P\}.
\]
The dual $P^{\vee}$ is again a polytope if and only if $\boldzero$ is an interior point of $P$; otherwise, $P^{\vee}$ is an unbounded polyhedron.
If $P$ and $P^{\vee}$ are both lattice polytopes, then $P$ is called \emph{reflexive}.
A lattice translate of a reflexive polytope is also called reflexive.
Reflexive polytopes were originally introduced by Batyrev \cite{BatyrevDualPolyhedra} to describe mirror symmetry phenomena in supersymmetric string theory.
Since their introduction, many fascinating combinatorial and arithmetic properties of theirs have been discovered \cite{BraunDavisSolus, Conrads, HaaseMelnikov, hibireflexive, Hibi, KreuzerSkarke98, Payne}.
An increasingly well-known result in Ehrhart theory related to reflexive polytopes is the following, also known as Braun's formula.

\begin{theorem}[Braun's formula,\cite{BraunEhrhartFormulaReflexivePolytopes}]
    If $P \subset \R^m$ is a reflexive polytope and $Q \subset \R^n$ is a lattice polytope containing the origin in its interior, then
    \[
        E_{P \oplus Q}(t) = (1-t)E_P(t)E_Q(t).
    \]
    Consequently,
    \[
	    h^*_{P \oplus Q}(t) = h_P^*(t)h_Q^*(t).
    \]
\end{theorem}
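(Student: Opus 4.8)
The plan is to reduce everything to a lattice-point count governed by the Minkowski gauge functionals of $P$ and $Q$. For a full-dimensional polytope $R$ containing the origin in its interior, write $\|z\|_R = \min\{\lambda \ge 0 : z \in \lambda R\}$, so that $z \in \lambda R$ exactly when $\|z\|_R \le \lambda$. First I would record the elementary description of the dilated free sum: since $\boldzero \in P$ and $\boldzero \in Q$, a point $(x,y) \in \R^{m+n}$ lies in $k(P \oplus Q)$ precisely when $x \in aP$ and $y \in bQ$ for some $a,b \ge 0$ with $a+b \le k$. Taking the optimal split $a = \|x\|_P$, $b = \|y\|_Q$ then gives
\[
    (x,y) \in k(P \oplus Q) \iff \|x\|_P + \|y\|_Q \le k,
\]
so that $L_{P \oplus Q}(k)$ counts the integer pairs $(x,y)$ with $\|x\|_P + \|y\|_Q \le k$.

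The key step — and the only place reflexivity of $P$ is used — is that the gauge of a reflexive polytope is integer-valued on the lattice. Writing $P = \{x : \inner{v}{x} \le 1 \text{ for every vertex } v \text{ of } P^{\vee}\}$ with each such $v \in \Z^m$, one has $\|x\|_P = \max_v \inner{v}{x}$, which is an integer whenever $x \in \Z^m$. Consequently the number of $x \in \Z^m$ with $\|x\|_P = j$ equals $L_P(j) - L_P(j-1)$, where $L_P(-1) := 0$. I would then stratify the count by the value $j = \|x\|_P$: because $j$ is an integer, the surviving constraint $\|y\|_Q \le k - j$ carries an integer threshold, so the inner count is exactly $L_Q(k-j)$ for $0 \le j \le k$ and is zero otherwise. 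This produces the convolution
\[
    L_{P \oplus Q}(k) = \sum_{j=0}^{k} \bigl(L_P(j) - L_P(j-1)\bigr)\, L_Q(k-j).
\]

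From here the formula follows by passing to generating functions. Telescoping gives $\sum_{j \ge 0} \bigl(L_P(j) - L_P(j-1)\bigr) t^j = (1-t) E_P(t)$, and recognizing the displayed convolution as a Cauchy product yields $E_{P \oplus Q}(t) = (1-t) E_P(t) E_Q(t)$. For the second assertion I would substitute $E_P(t) = h_P^*(t)/(1-t)^{m+1}$ and $E_Q(t) = h_Q^*(t)/(1-t)^{n+1}$, both valid since $P$ and $Q$ are full-dimensional (the origin being interior to each), and use $\dim(P \oplus Q) = m+n$ to match the denominator $(1-t)^{m+n+1}$; the single factor $(1-t)$ cancels exactly, leaving $h^*_{P \oplus Q}(t) = h_P^*(t) h_Q^*(t)$.

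I expect the main obstacle to be the integrality of $\|x\|_P$: this is precisely the property that fails for a general lattice polytope and is what forces the hypothesis that $P$ (rather than merely $Q$) be reflexive. Everything else is bookkeeping, but the decoupling in the convolution step deserves care, since a non-integral value of $\|x\|_P$ would break the identification of the inner sum with $L_Q(k-j)$: for non-integral $c$ the count $|cQ \cap \Z^n|$ need not coincide with any Ehrhart value. The asymmetry of the hypotheses is thus explained entirely by where integrality of the gauge is invoked.
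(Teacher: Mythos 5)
Your proposal is correct, but note that the paper does not actually prove this statement: it quotes Braun's formula as a known result, citing Braun's 2006 note, and reserves its own machinery (BKK bounds and mixed volumes) for the different, volume-only Theorem~2. So the comparison to make is with Braun's original argument, and your reconstruction is essentially that argument. The chain of steps checks out: the identity $(x,y) \in k(P \oplus Q) \iff \|x\|_P + \|y\|_Q \le k$ follows from writing points of the free sum as $(\lambda p, (1-\lambda)q)$ and using that $aP \subseteq a'P$ when $0 \le a \le a'$ (here the hypothesis $\boldzero \in P \cap Q$ is what makes the dilates nested); the integrality of $\|x\|_P$ on $\Z^m$ is exactly the standard characterization that for reflexive $P$ every lattice point lies on the boundary of a unique nonnegative integer dilate of $P$, and your derivation of it from $\|x\|_P = \max_v \inner{v}{x}$ over the (integral) vertices $v$ of $P^{\vee}$ is sound because those vertices positively span $\R^m$; the convolution and the Cauchy-product step are then routine, and the $h^*$ statement follows from $\dim(P\oplus Q) = m+n$. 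You also correctly isolate why the hypotheses are asymmetric: only one summand needs an integer-valued gauge, since only one threshold $k-j$ must land on an integer for the inner count to be an Ehrhart value. One small caveat: the paper extends the word ``reflexive'' to lattice translates of reflexive polytopes, for which your argument (and the theorem as stated) does not apply verbatim; your proof implicitly, and correctly, reads the hypothesis as $P^{\vee}$ being a lattice polytope, i.e., $\boldzero$ interior to $P$, which is what the definition of the free sum requires anyway.
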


The conditions needed for Braun's formula to hold were significantly relaxed in \cite{BJM13}, including a multivariate generalization.
However, we will not need such power here.

Because $h^*_P(1)$ gives the normalized volume of $P$, it follows that if $P \oplus Q$ is a free sum satisfying the conditions of Braun's formula, then
\[
	\nvol_{m+n}(P \oplus Q) = \nvol_m(P)\nvol_n(Q).
\]
If $\boldzero$ is merely on the boundary of $P$ or $Q$, then $P \oplus Q$ is still called a free sum but its $h^*$-polynomial may not factor.
In this note we shall examine instances in which the $h^*$-polynomial may fail to factor
yet the normalized volume of the free sum does factor as the product of
normalized volumes of the summands.

\begin{example}
	Let $P = T_2$ as in Example~\ref{ex:reeve} and let $Q = P \oplus P \subset \R^6$.
	It is easy to verify that $P$ contains no interior lattice point, and so is not reflexive.
	Moreover, a routine computation shows that the $h^*$-vectors of $P$ and $P \oplus P$ are
	$(1,0,1)$ and $(1,0,2,1)$ respectively.
	Therefore the $h^*$-polynomial of $P \oplus P$ is distinct from $(h^*_P(t))^2$.
	However, the normalized volume factoring
	\[
		\nvol_6(P \oplus P) = 4 = \nvol_3(P) \nvol_3(P)
	\]
	remains valid.
\end{example}

The main goal of this article is to establish the general fact that the (top dimensional)
normalized volume of a free sum is always the product of the normalized volumes
of the summands.

\begin{restatable}{theorem}{mainthm}
	\label{thm:mainthm}
	Given full-dimensional convex polytopes $P \subset \R^m$ and $Q \subset \R^n$,
	if both $P$ and $Q$ contain the origin (of $\R^m$ and $\R^n$ respectively),
	then
	\[
		\nvol_{m+n} (P \oplus Q) = \nvol_m (P) \nvol_n(Q).
	\]
\end{restatable}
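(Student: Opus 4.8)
The plan is to observe that, since $P$ and $Q$ are full-dimensional, their normalized volumes are just rescaled Euclidean volumes: $\nvol_m(P) = m!\,\vvol_m(P)$, $\nvol_n(Q) = n!\,\vvol_n(Q)$, and $\nvol_{m+n}(P\oplus Q) = (m+n)!\,\vvol_{m+n}(P\oplus Q)$. Thus the theorem is equivalent to the purely Euclidean identity
\[
    \vvol_{m+n}(P \oplus Q) = \frac{m!\,n!}{(m+n)!}\,\vvol_m(P)\,\vvol_n(Q),
\]
and I would prove this by computing the left-hand volume directly with Fubini's theorem, slicing $P \oplus Q$ by the second block of coordinates.

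First I would record the description
\[
    P \oplus Q = \bigcup_{\lambda \in [0,1]} (\lambda P)\times\big((1-\lambda)Q\big),
\]
which follows at once from the fact that every point of $P\oplus Q$ has the form $(\lambda p,(1-\lambda)q)$ with $p\in P$, $q\in Q$, $\lambda\in[0,1]$. Fix $\boldy\in\R^n$ and let $\gamma_Q(\boldy)=\inf\{t>0 : \boldy\in tQ\}$ be the radial gauge of $Q$; since $\boldzero\in Q$ this is finite on $Q$ and satisfies $\gamma_Q(\boldy)\le 1$ exactly when $\boldy\in Q$. For such a $\boldy$, the point $(\boldx,\boldy)$ lies in $P\oplus Q$ iff there is a $\lambda$ with $0 \le \lambda \le 1-\gamma_Q(\boldy)$ and $\boldx\in\lambda P$. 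Because $\boldzero\in P$ and $P$ is convex, $P$ is star-shaped about the origin, so $\bigcup_{0\le\lambda\le s}\lambda P = sP$; hence the $\boldx$-slice over $\boldy$ is exactly $\big(1-\gamma_Q(\boldy)\big)P$, of $m$-dimensional volume $\big(1-\gamma_Q(\boldy)\big)^m\vvol_m(P)$. Fubini then gives
\[
    \vvol_{m+n}(P\oplus Q) = \vvol_m(P)\int_Q \big(1-\gamma_Q(\boldy)\big)^m\,d\boldy.
\]

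To evaluate the remaining integral I would pass to radial coordinates adapted to $Q$. Since $\vvol_n(tQ)=t^n\vvol_n(Q)$, the push-forward of Lebesgue measure on $Q$ under $\gamma_Q$ has density $n\,r^{n-1}\vvol_n(Q)$ on $[0,1]$, so that for any integrable $f$ one has $\int_Q f(\gamma_Q(\boldy))\,d\boldy = n\,\vvol_n(Q)\int_0^1 f(r)\,r^{n-1}\,dr$. Taking $f(r)=(1-r)^m$ produces a Beta integral,
\[
    \int_0^1 (1-r)^m r^{n-1}\,dr = \frac{(n-1)!\,m!}{(n+m)!},
\]
whence $\int_Q\big(1-\gamma_Q(\boldy)\big)^m\,d\boldy = \tfrac{m!\,n!}{(m+n)!}\vvol_n(Q)$. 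Substituting this into the displayed Fubini identity yields the Euclidean formula above, and multiplying through by $(m+n)!$ gives the theorem.

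The step I expect to need the most care is the radial integration identity together with the degenerate case in which the origin lies on the boundary of $P$ or of $Q$ rather than in its interior. In that case $\gamma_Q$ is no longer a genuine norm (it may vanish or be infinite in some directions), but the only properties actually used are the scaling relation $\vvol_n(tQ)=t^n\vvol_n(Q)$, the identity $Q=\{\boldy : \gamma_Q(\boldy)\le 1\}$ (valid since $Q$ is closed, convex, and contains $\boldzero$), and the star-shapedness of $P$ about $\boldzero$; all of these persist on the boundary, so the computation goes through unchanged. I would verify these points carefully, and if preferred one could instead treat the interior case first and obtain the boundary case by a continuity argument.
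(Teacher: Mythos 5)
Your proposal is correct, and it is a genuinely different argument from the one in the paper. The paper's proof is algebraic: it builds a generic Laurent polynomial system $H$ whose first $m$ equations are supported on the vertices of $P'$ and last $n$ on the vertices of $Q'$, counts the isolated roots of $H$ in two ways --- via Bernshtein's theorem together with a combinatorial criterion (Theorem~\ref{thm:mainthma}) for when the mixed volume $\mvol(P',\dots,P',Q',\dots,Q')$ equals $\nvol_{m+n}(P\oplus Q)$ (verified by a case analysis on the faces of $P\oplus Q$), and via Kushnirenko's theorem applied to the decoupled subsystems in $\boldx$ and $\boldy$ --- and then extends from lattice polytopes to rational ones by homogeneity and to real ones by continuity of volume. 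Your proof instead computes $\vvol_{m+n}(P\oplus Q)$ directly: the decomposition $P\oplus Q=\bigcup_{\lambda\in[0,1]}(\lambda P)\times((1-\lambda)Q)$, the identification of the slice over $\boldy$ as $\bigl(1-\gamma_Q(\boldy)\bigr)P$ via the gauge function, and the push-forward/Beta integral $\int_0^1(1-r)^m r^{n-1}\,dr=\tfrac{m!\,(n-1)!}{(m+n)!}$ are all sound, including your treatment of the case where $\boldzero$ lies on the boundary (for a bounded closed convex $Q$, the gauge vanishes only at $\boldzero$ and its sublevel sets are exactly $rQ$, which is all you use). What your route buys: it is elementary, it needs no lattice-to-rational-to-real limiting step, and it in fact proves the statement for arbitrary convex bodies, not just polytopes, since polytopality is never used; it is essentially the unpublished ``elementary convex-analytic'' proof the paper attributes to McAllister. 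What the paper's route buys is precisely its stated purpose: exhibiting the link between root counts of polynomial systems (BKK/Kushnirenko bounds) and the convex geometry of free sums, which your argument does not illuminate.
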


This theorem demonstrates that even if the distribution of lattice points in the various dilations of $P \oplus Q$ does not carry over from that of $P$ and $Q$, we can still easily recover geometric information about the polytope.

It is easy to see that if $P'$ and $Q'$ do not intersect, then there is no hope of a similar product formula for $\nvol_{m+n}(P \oplus Q)$ to occur in general.
As a simple example, note that if $P = [k,k+1] \subset \R$ for some positive integer $k$ and $Q = [0,1] \subset \R$, then the product of their normalized volumes is $1$ but the normalized volume of $P \oplus Q$ increases as $k \to \infty$.

Section~\ref{sec:notations} recalls important results related to
lattice polytopes and roots of Laurent polynomial systems.
We then present the proof of Theorem~\ref{thm:mainthm} in Section~\ref{sec:proofs}.

We conclude the introduction by remarking that Theorem~\ref{thm:mainthm} is implicit in the work of \cite{StapledonFreeSums}, which considers a \emph{weighted $h^*$-polynomial}.
While Stapledon's results are more general (by considering rational dilations of polytopes), our alternate proof for integer dilations approaches the question via bounds on isolated points of complex varieties.
Through personal communication, we also learned that Tyrrell McAllister produced
(but did not publish) yet another proof of this result via elementary
convex-analytic techniques in 2015~\cite{mcallister_private_2018}.
In comparison, our approach highlights the close connection between
algebra and convex geometry.
Finally, we point out that a similar statement can be made about
co-convex bodies~\cite{alilooee2016generalized}.

\section{Preliminaries} \label{sec:notations}

This section briefly reviews notations and concepts to be used.
Given a polytope $P \subset \R^n$ and a linear form $l(\boldx)$, we call
\[
	l_b(\boldx) := \{\boldx \in \R^n \mid l(\boldx) = b\}
\]
a \emph{supporting hyperplane} of $P$ if either
\[
    l(\boldx) \leq b \text{ for all } \boldx \in P
\]
or
\[
    l(\boldx) \geq b \text{ for all } \boldx \in P.
\]
A \emph{face} of $P$ is a set of the form $l_b(\boldx) \cap P$ for some supporting hyperplane $l_b(\boldx)$ of $P$.
The empty set and $P$ itself are faces of $P$, corresponding to a hyperplane that does not intersect $P$ in the former case, and the ``empty'' hyperplane in the latter.
The \emph{dimension} of a face $F$ is defined as $\dim \aff F$, where $\aff F$ is the affine span of $F$.
By convention, $\dim \emptyset := -1$.
A face of dimension $0$ is called a \emph{vertex}, a face of dimension $1$ is called an \emph{edge}, and a face of dimension $\dim P - 1$ is called a \emph{facet}.

Next, recall that the \emph{Minkowski sum} of two sets $A,B \subset \R^n$,
is
\[
A + B = \{ \bolda + \boldb \in \R^n \mid \bolda \in A, \boldb \in B \}.
\]
Given polytopes $Q_1,\dots,Q_n \subset \R^n$, and positive
scalars $\lambda_1,\dots,\lambda_n$, the Minkowski sum
$\lambda_1 Q_1 + \cdots + \lambda_n Q_n$ is also a polytope, and its volume is a homogeneous polynomial
in $\lambda_1,\dots,\lambda_n$.
In it, the coefficient of $\lambda_1 \cdots \lambda_n$ is the \emph{mixed volume}\footnote{%
    An alternative definition for mixed volume is the coefficient of
    $\lambda_1 \cdots \lambda_n$ in the above polynomial divided by $n!$.
}
\cite{minkowski_theorie_1911}
of $Q_1,\dots,Q_n$, denoted $\mvol(Q_1,\dots,Q_n)$.
It agrees with normalized volume ``on the diagonal,'' that is,
$\mvol(Q,\dots,Q) = \nvol_n(Q)$.

Though the main results to be established in this note concern a geometric
property of polytopes, our proofs take a decidedly algebraic approach
via the theory for counting roots of Laurent polynomial systems.
A \emph{Laurent polynomial} in $x_1,\dots,x_n$ is an expression of the form
\[
	p = \sum_{k=1}^m c_k x_1^{a_{k,1}} \cdots x_n^{a_{k,n}}
\]
where $c_k \in \C^* = \C \setminus \{0\}$ and $a_{k,j} \in \Z$.
Thus, the Laurent polynomials are exactly the expressions obtained when localizing the usual polynomial ring $\C[x_1,\dots,x_n]$ at each indeterminate.
We adopt the compact notation $p = \sum_{\bolda \in S} c_{\bolda} \boldx^{\bolda}$
where each $\bolda = (a_1,\dots,a_n) \in \Z^n$ encodes the exponents of a term,
$\boldx^{\bolda} := x_1^{a_1} \cdots x_n^{a_n}$, and the set $S \subset \Z^n$
collecting all such exponents is known as the \emph{support} of $p$,
denoted $\supp(p)$.

For a Laurent polynomial system $P = (p_1,\dots,p_m)$ in $\boldx = (x_1,\dots,x_n)$,
we define $\V^*(P) := \{ \boldx \in (\C^*)^n \mid P(\boldx) = \boldzero \}$,
which is the common zero set of $P$ within $(\C^*)^n$.
Here, the dimension $n$ (of the ambient space) is generally clear from context
and hence does not directly appear in the notation $\V^*(P)$.
Such a zero set has rich internal structures
(e.g. the structure of a quasi-projective algebraic set
\cite{hartshorne_algebraic_1977}).
In particular, $\V^*(P)$ consists of finitely many irreducible components
each with a well-defined dimension.
Of special interest to us are the $0$-dimensional components of $\V^*(P)$, which are the \emph{isolated points}:
elements $\boldx$ for which there is an open set that contains $\boldx$ but does not contain any other points in $\V^*(P)$.
This subset will be denoted by $\V_0^*(P)$.
Our proof makes frequent use of the following two theorems:

\begin{theorem}[Kushnirenko's Theorem \cite{kushnirenko_newton_1976}]
\label{thm:kushnirenko}
    Given a Laurent polynomial system $P = (p_1,\dots,p_n)$ in
    $\boldx = (x_1,\dots,x_n)$,
    if the supports of $p_1,\dots,p_n$ are identical,
    that is, if $S:= \supp(p_1) = \cdots = \supp(p_n)$, then
\[
    	|\V_0^*(P)| \le n! \vvol_n (\conv(S)).
\]
\end{theorem}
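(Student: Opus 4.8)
The plan is to recognize the stated inequality as the equal-support instance of the Bernstein--Kushnirenko root count and to prove it in two stages. First I would establish that a \emph{generic} system with support $S$ has exactly $n!\,\vvol_n(\conv(S))$ simple zeros in $(\C^*)^n$; then I would bound the isolated zeros of an arbitrary such system by deforming it to a generic one and invoking upper semicontinuity of the isolated-root count. Since the theorem asserts only an inequality, the genericity result supplies the extremal value while the deformation argument does the remaining work, and I would not need to track multiplicities beyond noting that counting isolated points without multiplicity can only lower the tally.

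For the generic case, write $\Delta := \conv(S)$ and consider the monomial parametrization $\boldx \mapsto (\boldx^{\bolda})_{\bolda \in S}$, whose image closure is the projective toric variety $X_\Delta \subset \mathbb{P}^{|S|-1}$. After clearing a common monomial, each equation $p_i = \boldzero$ pulls back from a hyperplane of $\mathbb{P}^{|S|-1}$, so the common torus zeros of $p_1,\dots,p_n$ correspond to the intersection of $X_\Delta$ with $n$ hyperplanes. For generic coefficients a Bertini-type argument makes this intersection transverse and confined to the open torus, so the number of zeros equals the product of the degree of $X_\Delta$ and the degree of the parametrization (its generic number of torus preimages); this product is precisely $n!\,\vvol_n(\Delta)$, the normalized volume being $n!$ times the leading coefficient of the Ehrhart polynomial of $\Delta$, which computes the Hilbert function of the toric coordinate ring. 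This yields equality for generic systems, with all roots simple.

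To reach an arbitrary system $P$, I would connect it to a generic system $P^{\mathrm{gen}}$ of the same support along a path $P_t$ in coefficient space, with $P_0 = P$ and $P_1 = P^{\mathrm{gen}}$, keeping every support equal to $S$. Consider the family of torus solutions over the parameter $t$: over generic $t$ the fiber consists of exactly $n!\,\vvol_n(\Delta)$ points, tracing out at most that many solution branches. Every point of $\V_0^*(P)$ is then the $t \to 0$ limit of at least one such branch, and distinct isolated points require distinct branches. Hence the number of isolated torus zeros of $P$ cannot exceed the generic count, giving $|\V_0^*(P)| \le n!\,\vvol_n(\conv(S))$.

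The hard part will be the two places where the torus and its compactification interact. First, the identification of the generic count with $n!\,\vvol_n(\Delta)$ is the genuine content: it requires both the toric degree computation and a transversality argument guaranteeing that generic roots are simple and interior to the torus rather than stranded on the toric boundary. Second, the semicontinuity step must rule out isolated torus zeros of $P$ that are \emph{not} limits of genuine branches, and must control branches that escape to the boundary of $(\C^*)^n$ as $t \to 0$; making this rigorous calls for properness of the solution family over a suitable compactification, for which $X_\Delta$ is the natural choice. I expect the degree-equals-normalized-volume identification to be the true obstacle, since the deformation machinery, once properness is in place, is standard conservation of number.
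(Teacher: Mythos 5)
The paper does not prove this statement at all: Theorem~\ref{thm:kushnirenko} is imported as background, cited directly to Kushnirenko's 1976 paper, and then used as a black box in the proof of Theorem~\ref{thm:mainthm}. So your proposal can only be compared with the literature, and there it fares well: the two-stage plan --- exact count with simple roots for generic coefficients via the projective toric variety $X_\Delta$ cut by $n$ generic hyperplanes, followed by a deformation/conservation-of-number argument to bound the isolated roots of an arbitrary system --- is precisely the classical Bernstein--Kushnirenko strategy, and your identification of the two genuinely hard points (degree of $X_\Delta$ equals normalized volume, and the semicontinuity of the isolated-root count) is accurate.

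Three spots need tightening before this is a proof. First, the degenerate case $\dim \conv(S) < n$, where the right-hand side is zero, is not covered by branch-tracking: there the generic fibers are positive-dimensional, so ``every isolated zero is a limit of a branch'' says nothing. Handle it directly: if the differences $S - S$ do not span $\R^n$, then $\V^*(P)$ is invariant under a positive-dimensional subtorus acting on $(\C^*)^n$, hence has no isolated points at all. Second, your degree computation needs lattice bookkeeping when $\Z(S-S) \ne \Z^n$: the degree of $X_\Delta$ is the volume of $\Delta$ normalized with respect to the lattice generated by $S - S$, the parametrization degree is the index $[\Z^n : \Z(S-S)]$, and only the product of the two telescopes to $n!\,\vvol_n(\Delta)$; your phrase ``degree of the parametrization'' gestures at this but should be made explicit. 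Third, you over-engineer the deformation step: properness over $X_\Delta$ is not needed for an upper bound, since branches escaping to the toric boundary as $t \to 0$ only \emph{decrease} the count at $t = 0$, which is harmless. The one thing you must prove is that each isolated zero of $P_0$ absorbs at least one branch, and that is purely local --- uniform convergence $P_t \to P_0$ on a small ball around an isolated zero, together with positivity of the local topological (Rouch\'e/Brouwer) degree, forces $P_t$ to have zeros in that ball for small $t$, and disjoint balls around distinct isolated zeros yield distinct branches. Combined with the fact that a generic segment in coefficient space meets the discriminant locus in only finitely many $t$ (the genericity in Remark~\ref{rmk:exactness}), this closes the argument.
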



\begin{theorem}[Bernshtein's First Theorem \cite{bernshtein_number_1975}]
\label{thm:bernshtein-a}
    For a Laurent polynomial system $P(x_1,\dots,x_n) = (p_1,\dots,p_n)$,
\[
	|\V_0^*(P)| \le \mvol (\conv(\supp(p_1)),\dots,\conv(\supp(p_n))).
\]
\end{theorem}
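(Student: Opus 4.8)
The plan is to obtain the mixed-volume bound from a polyhedral deformation of the system, a toric degeneration governed by a generic lifting of the supports; this is the idea behind Bernshtein's original argument and the polyhedral homotopy method. A pleasant feature is that no genericity on the coefficients is required: the $c_{i,\bolda}$ lie in $\C^*$ and are arbitrary, while all of the genericity is imposed on an auxiliary lifting that we are free to choose. Writing $Q_i := \conv(\supp(p_i))$, I would first fix for each $i$ a lifting $\omega_i \colon \supp(p_i) \to \R$ in general position, lift $Q_i$ to $\widehat{Q}_i \subset \R^{n+1}$ by raising each $\bolda$ to height $\omega_i(\bolda)$, and form the lower hull of the Minkowski sum $\widehat{Q}_1 + \cdots + \widehat{Q}_n$. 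For generic $\omega$, the projection of this lower hull back to $\R^n$ is a fine mixed subdivision of $Q_1 + \cdots + Q_n$ whose maximal cells are Minkowski sums $F_1 + \cdots + F_n$ of faces $F_i$ of $Q_i$ with $\sum_i \dim F_i = n$.

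Next I would single out the cells of mixed type $(1,\dots,1)$, those for which every $F_i$ is an edge with integer edge vector $\bolde_i$, so that the cell is the parallelepiped spanned by $\bolde_1,\dots,\bolde_n$. Introducing the one-parameter family $p_i^t(\boldx) = \sum_{\bolda \in \supp(p_i)} c_{i,\bolda}\, t^{\omega_i(\bolda)}\, \boldx^{\bolda}$, each such cell prescribes, after the monomial change of coordinates dictated by the inner normal of the associated lower facet, a binomial start system whose number of solutions in $(\C^*)^n$ equals the lattice index $|\det[\,\bolde_1 \cdots \bolde_n\,]|$, the Euclidean volume of the cell. By Minkowski's theorem on mixed subdivisions these volumes sum, over all mixed cells, to exactly $\mvol(Q_1,\dots,Q_n)$; hence the deformation carries precisely $\mvol(Q_1,\dots,Q_n)$ solution paths.

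Finally I would track the homotopy as $t$ moves from the degenerate end toward $t = 1$, where $p_i^1 = p_i$ recovers $P$. What remains is to show that every isolated point of $\V_0^*(P)$ is the endpoint of at least one path, which yields $|\V_0^*(P)| \le \mvol(Q_1,\dots,Q_n)$. I expect this path-covering property to be the main obstacle. The route I would take is a Newton--Puiseux analysis of the branches of $\{(\boldx,t) \in (\C^*)^n \times (0,1] : p_i^t(\boldx) = 0 \text{ for all } i\}$ as $t \to 0^+$: each branch admits a Puiseux expansion whose lowest-order exponents select a facet of the lower hull, branches with a limit inside the torus select exactly the mixed cells, and every isolated zero at $t=1$ continues analytically back along such a branch. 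Equivalently, one compactifies $(\C^*)^n$ inside the projective toric variety of a fan refining the normal fans of the $Q_i$ and invokes properness of the resulting family together with conservation of intersection multiplicity. I would favor the Puiseux route, since it stays within the algebraic, root-counting framework already in use and avoids developing toric intersection theory from scratch.
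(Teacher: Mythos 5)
The paper contains no proof of this statement for you to be compared against: Theorem~\ref{thm:bernshtein-a} is stated in the Preliminaries as a classical result of Bernshtein and is invoked downstream as a black box (see Remark~\ref{rmk:exactness}). Judged on its own terms, your proposal reconstructs what is essentially the standard proof---Bernshtein's original Puiseux-series degeneration, later systematized by Huber and Sturmfels as the polyhedral homotopy---and the outline is sound: the genericity is correctly placed on the lifting rather than on the coefficients (a binomial start system has exactly $\left|\det[\,\bolde_1 \cdots \bolde_n\,]\right|$ nondegenerate torus solutions for \emph{any} nonzero coefficients), the identity ``sum of mixed-cell volumes equals the mixed volume'' is the right bookkeeping, and you correctly identify the path-covering property as the crux. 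Three points need tightening. First, work over the complex $t$-line rather than $t \in (0,1]$: an isolated zero at $t=1$ lies on a component of the incidence variety of dimension at least one (it is cut out by $n$ equations in $n+1$ variables, and a positive-dimensional component inside the fiber $t=1$ would contradict isolatedness), hence on a component dominating the $t$-line; connecting it to the branches at $t \to 0$ may require detouring around finitely many bad parameter values, which the algebraic Puiseux picture handles but a real path in $(0,1]$ need not. Second, your phrase ``branches with a limit inside the torus select exactly the mixed cells'' should say that \emph{every} branch---including those escaping to the toric boundary---has a Puiseux expansion whose leading-coefficient vector lies in $(\C^*)^n$ and solves the initial system of the lower face selected by its leading exponent; torus solvability forces each $F_i$ to be at least an edge, and with $n$ summands this forces a mixed cell. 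Third, the count requires ruling out several branches, or ramification inflating the degree over the $t$-line, attached to a single start solution; this is precisely where nondegeneracy of the binomial solutions and the implicit function theorem (after the monomial rescaling) must be invoked to get one branch per start solution. With these repairs your argument is the classical one and is correct, and it sits squarely within the BKK circle of ideas the paper takes as given.
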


\begin{remark}\label{rmk:exactness}
    Implicit in the above theorems is the fact that the upper bounds for $|\V_0^*(P)|$
    given in both statements are ``generically exact'' in the sense that if the coefficients
    are chosen at random, the probability of picking coefficients for which the bounds
    are not exact is zero.
    Stated more precisely, the set of coefficients for which the bounds are exact
    form a nonempty Zariski open set among the set of all possible choices of coefficients.
    The generically exact solution bound given by Theorem~\ref{thm:bernshtein-a} has since
    been known as the BKK bound after the circle of works by
    Bernshtein~\cite{bernshtein_number_1975},
    Kushnirenko~\cite{kushnirenko_newton_1975,kushnirenko_newton_1976},
    and Khovanskii~\cite{khovanskii_newton_1978}.
\end{remark}

Our main proof additionally relies on the following result on the connection between
mixed volume and normalized volume established independently in~\cite{Bihan2017}
and~\cite{chen_unmixing_2017}.

\begin{theorem}[{\cite[Theorem 2]{chen_unmixing_2017}}]\label{thm:mainthma}
    Given nonempty finite sets $S_1,\dots,S_n \subset \Q^n$,
    let $\tilde{S} = S_1 \cup \cdots \cup S_n$.
    If every positive-dimensional face $F$ of $\conv(\tilde{S})$ satisfies
    one of the following conditions:
    \begin{description}[leftmargin=5ex]
    	\item[(A)]
    	$F \cap S_i \ne \varnothing$ for all $i \in \{1,\dots,n\}$;

    	\item[(B)]
    	$F \cap S_i$ is a singleton for some $i \in \{1,\dots,n\}$;

    	\item[(C)]
    	For each $i \in I := \{ i \mid F \cap S_i \ne \varnothing \}$,
    	$F \cap S_i$ is contained in a common coordinate subspace of
    	dimension $|I|$, and the projection of $F$ in this subspace is
    	of dimension less than $|I|$;
    \end{description}
    then $\mvol (\conv(S_1), \dots, \conv(S_n)) = \nvol_n (\conv(\tilde{S}))$.
\end{theorem}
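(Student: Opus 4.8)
The plan is to prove the equality by sandwiching $\mvol(C_1,\dots,C_n)$ and $\nvol_n(\tilde C)$ between counts of isolated roots of Laurent systems, where I abbreviate $C_i := \conv(S_i)$ and $\tilde C := \conv(\tilde S)$. Throughout I fix a system $g = (g_1,\dots,g_n)$ with $\supp(g_i) = S_i$ and \emph{generic} coefficients, and I regard every inequality between volumes as an inequality between numbers of points in $\V_0^*$ of suitable systems. The easy direction $\mvol(C_1,\dots,C_n) \le \nvol_n(\tilde C)$ follows directly from the stated theorems: by Theorem~\ref{thm:bernshtein-a} and Remark~\ref{rmk:exactness} the generic $g$ satisfies $|\V_0^*(g)| = \mvol(C_1,\dots,C_n)$, while, upon assigning the coefficient $0$ to every monomial in $\tilde S \setminus S_i$, the same $g$ is a system whose equations share the common support $\tilde S$, so Theorem~\ref{thm:kushnirenko} gives $|\V_0^*(g)| \le \nvol_n(\tilde C)$.

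For the reverse inequality I would begin from a generic system $f = (f_1,\dots,f_n)$ in which each $f_i$ has the full support $\tilde S$; by Theorem~\ref{thm:kushnirenko} and Remark~\ref{rmk:exactness} it has exactly $\nvol_n(\tilde C)$ isolated toric roots. Viewing $g$ as the $s \to 0$ limit of the coefficient deformation $H_i(\boldx,s) = g_i(\boldx) + s\sum_{\bolda \in \tilde S \setminus S_i} c_{i,\bolda}\boldx^{\bolda}$ (so that $H(\cdot,1) = f$ and $H(\cdot,0) = g$, for a generic choice of the extra $c_{i,\bolda}$), I would track the $\nvol_n(\tilde C)$ solution paths. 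Each path either converges to an isolated toric root of $g$ or diverges to infinity or to the coordinate boundary of $(\C^*)^n$; since $g$ is generic, the convergent paths account with multiplicity for exactly its $\mvol(C_1,\dots,C_n)$ roots, so the number of divergent paths is precisely $\nvol_n(\tilde C) - \mvol(C_1,\dots,C_n)$. It therefore suffices to show that, under hypotheses (A)--(C), \emph{no} path diverges.

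The heart of the argument is this no-escape analysis, carried out face by face. By the standard toric (tropical) degeneration behind the BKK theory, a diverging path admits a Puiseux expansion $\boldx(s) \sim \boldc\, s^{\mathbf{w}}$ with $\boldc \in (\C^*)^n$ and $\mathbf{w} \in \Q^n \setminus \{\boldzero\}$, and its existence forces the $\mathbf{w}$-leading forms of $H_1,\dots,H_n$ to share the common toric root $\boldc$. Writing $\beta = \min_{\bolda \in \tilde S}\inner{\mathbf{w}}{\bolda}$ and letting $F$ be the corresponding face of $\tilde C$, a short order-of-vanishing computation shows that whenever $S_i \cap F \ne \varnothing$ the $\mathbf{w}$-leading form of $H_i$ is supported \emph{exactly} on $S_i \cap F$. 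If $F$ is a vertex it lies in some $S_i$ (as vertices of $\tilde C$ lie in $\tilde S$), making that leading form a monomial, so we may assume $F$ is positive-dimensional and invoke the hypotheses. Condition (B) makes one leading form a single monomial, which cannot vanish on the torus; condition (A) places every leading Newton polytope $\conv(S_i \cap F)$ inside $\aff F$, a proper affine subspace, so the facial mixed volume vanishes. Condition (C), the delicate case, confines the $|I|$ leading polytopes indexed by $I = \{i : S_i \cap F \ne \varnothing\}$ to a coordinate subspace $W$ of dimension $|I|$ whose image $\pi_W(F)$ has $\dim < |I|$; their Minkowski sum then has dimension $\le \dim \pi_W(F) < |I| = \dim W$, so $\mvol_W$ of these polytopes is $0$. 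Invoking the product formula expressing the ambient mixed volume as a positive multiple of $\mvol_W$ of the $|I|$ summands in $W$ times $\mvol_{W^\perp}$ of the projections of the remaining summands, the facial mixed volume is again $0$. In every case the facial system has no common toric root, so no path can diverge in direction $\mathbf{w}$; summing over the finitely many faces yields $\nvol_n(\tilde C) = \mvol(C_1,\dots,C_n)$.

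The step I expect to be the main obstacle is case (C): making the mixed-volume product formula over complementary coordinate subspaces precise and confirming that the dimension deficiency $\dim \pi_W(F) < |I|$ exactly annihilates the relevant factor, including bookkeeping of the lattice splitting $\Z^n \cong \Z^{|I|} \oplus \Z^{n-|I|}$. The remaining technical point is to justify that the deformation $H$ is generic enough that convergent paths account for precisely $\mvol(C_1,\dots,C_n)$ and that escape is governed solely by the facial systems above; both are routine consequences of the genericity in Remark~\ref{rmk:exactness}, but they must be set up with care.
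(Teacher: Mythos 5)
You should first note that the paper contains no proof of this statement to compare against: it is imported verbatim as Theorem~2 of \cite{chen_unmixing_2017}, with \cite{Bihan2017} cited as an independent source. Judged on its own merits, your plan is sound, and it is in the spirit of the algebraic proof in the cited work: your coefficient homotopy plus Puiseux escape analysis is essentially a hand-rolled version of Bernshtein's second theorem (the equality criterion accompanying Theorem~\ref{thm:bernshtein-a}), whereas Bihan and Soprunov reach the same conclusion purely convex-geometrically, via equality conditions for monotonicity of mixed volumes, with no algebraic geometry at all. Your central computation is correct: along a branch $\boldx(s) \sim \boldc\, s^{\mathbf{w}}$ the term $s\, c_{i,\bolda} \boldx^{\bolda}$ has order $\inner{\mathbf{w}}{\bolda} + 1$, so whenever $S_i \cap F \neq \varnothing$ the leading form of $H_i$ is supported exactly on $S_i \cap F$; the vertex case and case (B) (a monomial leading form has no toric zero) are handled correctly, and the path-counting bookkeeping (each nondegenerate root of $g$ attracts exactly one path, by the implicit function theorem) closes the argument once no path escapes.

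Two steps need repair, though neither is fatal. First, the ``easy direction'' misuses Theorem~\ref{thm:kushnirenko}: the paper defines the support with coefficients in $\C^*$, so after assigning the extra coefficients the value zero the supports of the $g_i$ are still $S_i$, not $\tilde{S}$, and the hypothesis of identical supports fails; replace this by monotonicity of mixed volume, $\mvol(C_1,\dots,C_n) \le \mvol(\tilde{C},\dots,\tilde{C}) = \nvol_n(\tilde{C})$ since $C_i \subseteq \tilde{C}$. Second, in cases (A) and (C) you infer ``no common toric root'' from ``facial mixed volume zero,'' but vanishing mixed volume by itself only controls \emph{isolated} roots; the correct (and simpler) argument is a dimension count on generic systems. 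Under (A), all $n$ leading supports lie in the single hyperplane $\inner{\mathbf{w}}{\boldx} = \beta$, so after multiplying each leading form by a monomial and performing a monomial change of coordinates one has $n$ generic equations in $n-1$ effective variables, hence no root in $(\C^*)^n$. Under (C), the leading forms indexed by $I$ involve only the $|I|$ coordinates of $W$ (their exponent vectors vanish off $W$), and their supports lie in the image of $F$ under the coordinate projection onto $W$, which has affine dimension less than $|I|$; thus the subsystem indexed by $I$ alone is already overdetermined and generically inconsistent on the torus. This makes the step you flagged as the main obstacle unnecessary: no product formula over complementary coordinate subspaces, and no analysis of the leading forms with $i \notin I$ (whose supports, as your own order computation shows, need not lie at the level $\beta$ and can mix $g$-terms with $s$-terms), is required. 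With these replacements, and the routine coefficient-parameter fact that the line $s \mapsto$ coefficients meets the Zariski-open exactness locus because it does so at $s = 1$, your argument is complete.
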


Note that if $\conv(\tilde{S})$ is not full-dimensional,
then both sides of the above equations will be zero.

\section{Proofs of the main results}\label{sec:proofs}

We now turn to the main result, Theorem \ref{thm:mainthm},
which we shall recall here:

\mainthm*

\begin{proof}
    First, we consider the cases where $P$ and $Q$ are both lattice polytopes.
	Let $S \subset \Z^m$ and $T \subset \Z^n$ be the set of vertices of
	$P$ and $Q$ respectively.
	Also let $P' = \{ (\boldp,\boldzero) \in \R^{m+n} \mid \boldp \in P \}$
	and      $Q' = \{ (\boldzero,\boldq) \in \R^{m+n} \mid \boldq \in Q \}$.
	The finite sets $S',T' \subset \Z^{m+n}$ are the sets of vertices of
	$P'$ and $Q'$ respectively.
	Consider the two sets of Laurent monomials
	$\boldx^S = \{ \boldx^{\bolda} \mid \bolda \in S \}$ and
	$\boldy^T = \{ \boldy^{\boldb} \mid \boldb \in T \}$ in the variables
	$\boldx = (x_1,\dots,x_m)$ and $\boldy = (y_1,\dots,y_n)$.
	We take the linear combinations
	\begin{align*}
		f_i(\boldx) &:= \sum_{\bolda \in S} c_{i,\bolda} \boldx^{\bolda}
		\quad \text{for } i=1,\dots,m &
		g_j(\boldy) &:= \sum_{\boldb \in T} c'_{j,\boldb} \boldy^{\boldb}
		\quad \text{for } j=1,\dots,n
	\end{align*}
	where the coefficients $c_{i,\bolda}$ and $c'_{j,\boldb}$ can be taken generically,
    which is possible by the discussion in Remark~\ref{rmk:exactness}.
	We can now form a system of $m+n$ Laurent polynomial equations
	\[
		H(\boldx,\boldy) := (
			f_1(\boldx),\, \ldots \,,\, f_m(\boldx), \;
			g_1(\boldy),\, \ldots \,,\, g_n(\boldx)
		)
	\]
	in the $m+n$ variables $(\boldx,\boldy) = (x_1,\dots,x_n,y_1,\dots,y_n)$.
	It is easy to verify that
	\[
		\supp(H) = (\underbrace{S',\dots,S'}_m \,,\, \underbrace{T',\dots,T'}_n)
	\]
	Since the $c_{i,\bolda}$ and $c'_{j,\boldb}$ are generic, the BKK bound for $H$
	given by Theorem~\ref{thm:bernshtein-a} is exact.
	In that case,
	\[
		| \V^*_0 (H) | =
		\mvol(\conv(S'),\dots,\conv(S'),\conv(T'),\dots,\conv(T')).
	\]
	We shall now show, using Theorem~\ref{thm:mainthma}, that this quantity is also the
	normalized volume of
	\[
	    \conv(S' \cup T') = \conv(P' \cup Q') = P \oplus Q.
	\]

	Let $F$ be a positive dimensional proper face of $P \oplus Q$.
	We shall show $F$ always satisfy one of the conditions listed in
    Theorem~\ref{thm:mainthma}.

	\noindent\textbf{(Case I)}
	First, suppose $F$ contains $P'$ or $Q'$.
	Then by assumption, $F$ intersects both $P'$ and $Q'$ since the two must intersect
	at the origin according to the definition of a free sum.
	Therefore $F$ satisfies condition (A) in Theorem~\ref{thm:mainthma}.
	\smallskip

	\noindent\textbf{(Case II)}
	If $F$ is a proper face of $P'$ and $F \cap Q' = \varnothing$,
	then
	\[ F \subset P' \subset \{ (\boldp,\boldzero) \in \R^{m+n} \mid \boldp \in \R^m \}.\]
	In other words, $F$ is contained in a coordinate subspace of dimension $m$
	and its projection in this subspace is of dimension less than $m$.
	Therefore $F$ satisfies condition (C) in Theorem~\ref{thm:mainthma}.
	By the same argument, $F$ would satisfy the same condition if $F$ is a proper face
	of $Q'$ instead and $F \cap P' = \varnothing$.
	\smallskip

	\noindent\textbf{(Case III)}
	Finally, suppose $F$ does not contain $P'$ or $Q'$
	nor is it contained in $P'$ or $Q'$.
	Since a face of $P \oplus Q$ must contain vertices of $P \oplus Q$ which are necessarily
	of the form $(\boldp,\boldzero)$ or $(\boldzero,\boldq)$ for $\boldp \in P$ or $\boldq \in Q$,
	$F$ must intersect both $P'$ and $Q'$.
	That is, it satisfies condition (A) in Theorem~\ref{thm:mainthma}.
	\smallskip

	The above cases exhausts all the possibilities, therefore by Theorem \ref{thm:mainthma},
	\begin{align*}
		\mvol(\conv(S'),\dots,\conv(S'),\conv(T'),\dots,\conv(T'))
		&= \nvol(\conv(S' \cup T')) \\
		&= \nvol(\conv(P' \cup Q')) \\
		&= \nvol(P \oplus Q).
	\end{align*}

	Now focusing on $|\V^*_0(H)|$, we can see that the first $m$ equations in
	$H = \boldzero$ only involve variables $\boldx = (x_1,\dots,x_m)$
	and the remaining equations only involve variables $\boldy = (y_1,\dots,y_n)$.
	So the solutions to $H (\boldx,\boldy) = \boldzero$ are precisely the points
	of the form $(\boldx,\boldy)$ such that $F(\boldx) = \boldzero$
	and $G(\boldy) = \boldzero$, where
	\[
		F(\boldx) = (f_1(\boldx),\dots,f_m(\boldx)) \text{ and } G(\boldy) = (g_1(\boldy),\dots,g_n(\boldy)).
	\]
	Consequently,
	\[
		| \V^*_0 (H) | =
		| \V^*_0(F) | \cdot | \V^*_0(G) | =
		\nvol_m(P) \cdot \nvol_n(Q)
	\]
	by the Kushnirenko's Theorem (Theorem~\ref{thm:kushnirenko}).
	Therefore we have a chain of equalities
	\[
		\nvol(P \oplus Q) =
		\mvol(P',\dots,P',Q',\dots,Q') =
		|\V^*_0(H)| =
		\nvol_m(P) \cdot \nvol_n(Q).
	\]

	The above shows the statement holds for two lattice polytopes $P$ and $Q$.
	Since the volume forms $\vvol_{m+n}$, $\vvol_m$, and $\vvol_n$
	are homogeneous of degree $m+n$, $m$, and $n$ respectively
	under uniform scaling,
	this result directly extends to polytopes with vertices in
	$\Q^m$ and $\Q^n$ respectively.
	Then by the continuity of volume forms, the statement must also hold for
	polytopes with vertices of real coordinates.
\end{proof}

\end{document}